\newtheorem{theorem}{Theorem}[section]
\newtheorem{proposition}[theorem]{Proposition}
\newtheorem{corollary}[theorem]{Corollary}
\newtheorem{lemma}[theorem]{Lemma}
\theoremstyle{definition}
\newtheorem*{definition}{Definition}
\theoremstyle{remark}
\newtheorem{remark}[theorem]{Remark}
\newtheorem{example}[theorem]{Example}
\newtheorem*{notation}{Notation}
\newtheorem*{notations}{Notations}
\newtheorem*{comment}{Comment}
\begin{document}
\title{Pseudo-diagonals and Uniqueness Theorems}
\author{Gabriel Nagy}
\address{Department of Mathematics, Kansas State university, Manhattan KS 66506, U.S.A.}
\email{nagy@math.ksu.edu}
\author{Sarah Reznikoff}
\address{Department of Mathematics, Kansas State university, Manhattan KS 66506, U.S.A.}
\email{sarahrez@math.ksu.edu}
\keywords{diagonal, maximal abelian C*-subalgebra}
\subjclass{Primary 46L10; Secondary 46L30}
\begin{abstract}
We examine a certain type of abelian C*-subalgebras that allow one to give a unified treatment of two uniqueness theorems: for graph C*-algebras and for certain reduced crossed products.
\end{abstract}
\maketitle

This note is meant to complement the paper \cite{NaRe}, which provides one of the two main examples of our results, by offering a conceptual treatment of the Uniqueness Theorem for the C*-algebras associated with graphs satisfying
condition (L), and its generalization found in \cite{Sz}.
As pointed out in many places in the graph C*-algebra literature  (see for instance \cite{Ren1}), for graphs satisfying condition (L), a natural abelian C*-subalgebra
(which is referred to in \cite{NaRe} as
the ``diagonal'') turns out to give substantial information about
the ambient (graph) C*-algebra.
A similar treatment was proposed by Kumijian in \cite{Kum}, where he introduced the notion of C*-diagonals.

In this paper we explain how, by considerably weakening several hypotheses
in Kumjian's definitions (in particular by getting rid of normalizers
entirely), one can still obtain several key results, the most
significant one being an ``abstract'' uniqueness property (see Theorem \ref{pdimpliesac} below).
Another illustration of this general approach is given in the context of
reduced crossed products of abelian C*-algebras by essentially free actions of discrete
groups, where we recover another uniqueness result due to Archbold and Spielberg \cite{AS}.

Our treatment focuses on the unique state extension property, as discussed
in \cite{KaSin} and \cite{And}, by weakening the global requirement
made in the so-called Extension Property discussed in \cite{And}.

\section{Notations and Preliminaries}

\begin{notations}
For any C*-algebra $\mathcal{A}$, we denote by $S(\mathcal{A})$ its set of states, and by
$P(\mathcal{A})$ the set of pure states.
Given a C*-subalgebra $\mathcal{B}\subset\mathcal{A}$, using the Hahn-Banach Theorem, it follows that any
$\phi\in S(\mathcal{B})$ has at least one extension to some $\psi\in S(\mathcal{A})$, so the set
$$S_\phi(\mathcal{A})=\{\psi\in S(\mathcal{A})\,:\,\psi\big|_{\mathcal{B}}=\phi\}$$
is non-empty, convex and weak*-compact.
Remark that, if $\phi\in P(\mathcal{B})$, then every extreme point in $S_\phi(\mathcal{A})$ is in fact an
extreme point in $S(\mathcal{A})$, thus the intersection
$S_\phi(\mathcal{A})\cap P(\mathcal{A})$ is non-empty.

With this observation in mind, we define the space
$$P_1(\mathcal{B}\!\uparrow\!\mathcal{A})=\{\phi\in P(\mathcal{B})\,\text{: card}\,S_\phi(\mathcal{A})=1\}.$$
\end{notations}

\begin{remark} \label{func}
It is fairly easy to see that the sets $P_1$ enjoy the following functorial property:
\begin{itemize}
\item[{\sc (f)}] {\em If $\pi:\mathcal{A}\to\mathcal{M}$ is a $*$-homomorphism, and
$\phi\in S(\pi(\mathcal{B}))$ is such that $\phi\circ\pi\in P_1(\mathcal{B}\!\uparrow\!\mathcal{A})$, for some C*-subalgebra $\mathcal{B}\subset\mathcal{A}$, then $\phi\in P_1(\pi(\mathcal{B})\uparrow\pi(\mathcal{A}))$.}
\end{itemize}
\end{remark}

Concerning the space $P_1(\mathcal{B}\!\uparrow\!\mathcal{A})$, one has the following important result of Anderson.

\begin{theorem}[{\cite[Thm.3.2]{And}}]
Assume $\mathcal{A}$ is unital and the C*-subalgebra $\mathcal{B}\subset\mathcal{A}$ contains the unit of $\mathcal{A}$.
For a pure state $\phi\in P(\mathcal{B})$, the following conditions are equivalent:
\begin{itemize}
\item[(i)] $\phi\in P_1(\mathcal{B}\!\uparrow\!\mathcal{A})$;
\item[(ii)] there exists a map
$\psi:\mathcal{A}\to\mathbb{C}$ (no extra condition assumed), such that
\begin{itemize}
\item[$(*)$] for every $a \in \mathcal{A}$, for every $\varepsilon>0$, there exists a positive element
$b\in\mathcal{B}$, such that $\|b\|=\phi(b)=1$, and
$\|bab-\psi(a)b^2\|<\varepsilon$.
\end{itemize}
\end{itemize}
Moreover, if $\psi$ satisfies condition $(*)$, then $\psi$ is the unique (pure) state on $\mathcal{A}$ that extends $\phi$.
\end{theorem}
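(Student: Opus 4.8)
The plan is to prove the two implications separately; the implication (ii)$\Rightarrow$(i), together with the final ``moreover'' assertion, is the soft one, while (i)$\Rightarrow$(ii) carries the real content. Throughout write $E=\{b\in\mathcal B: b\ge 0,\ \|b\|=\phi(b)=1\}$ for the set of admissible elements, and note that for $a\in\mathcal A$ one has $bab-\psi(a)b^2=b\bigl(a-\psi(a)1\bigr)b$, so that $(*)$ is equivalent to: for every $a$ with $\psi(a)=0$ and every $\varepsilon>0$ there is $b\in E$ with $\|bab\|<\varepsilon$.

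For (ii)$\Rightarrow$(i) the key observation is that an admissible $b$ behaves like a unit for every state extension. Indeed, if $\omega\in S_\phi(\mathcal A)$ then $\omega(b)=\phi(b)=1=\|b\|$ and $0\le b\le 1$, so $c:=1-b$ is positive with $\omega(c)=0$; Cauchy--Schwarz applied to $c^{1/2}$ gives $\omega(cz)=\omega(zc)=0$ for all $z\in\mathcal A$, whence $\omega(bz)=\omega(z)=\omega(zb)$. In particular $\omega(bab)=\omega(a)$ and $\omega(b^2)=1$, so that $\omega(a)-\psi(a)=\omega\bigl(bab-\psi(a)b^2\bigr)$ has modulus at most $\|bab-\psi(a)b^2\|<\varepsilon$. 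Letting $\varepsilon\to0$ yields $\omega(a)=\psi(a)$ for every $a$, so every extension of $\phi$ equals $\psi$. Since $S_\phi(\mathcal A)$ is non-empty by Hahn--Banach (as in the Notations), $\psi$ is a state and is the unique extension; its purity follows from the remark in the Notations that extreme points of $S_\phi(\mathcal A)$ are extreme in $S(\mathcal A)$. This settles the ``moreover'' clause as well.

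For (i)$\Rightarrow$(ii), let $\psi$ be the unique extension and fix $a=a^*$ with $\psi(a)=0$; by the reduction above it suffices to produce, for each $\delta>0$, an element $b\in E$ with $\|bab\|$ small. I would first invoke the Hahn--Banach duality formula
$$\sup_{\omega\in S_\phi(\mathcal A)}\omega(a)=\inf\{\phi(c): c\in\mathcal B,\ c=c^*,\ c\ge a\},$$
whose nontrivial ($\ge$) direction comes from extending the real functional $x+ta\mapsto\phi(x)+t\,\psi(a)$ on $\mathcal B_{sa}+\mathbb R a$, which is dominated by the sublinear functional $p(h)=\inf\{\phi(c):c\in\mathcal B_{sa},\ c\ge h\}$; the Hahn--Banach extension is then a state in $S_\phi(\mathcal A)$. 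Uniqueness forces the left-hand side to equal $\psi(a)=0$, and applying this to $a$ and to $-a$ produces, for any $\delta>0$, self-adjoint $c,c'\in\mathcal B$ with $-c'\le a\le c$ and $\phi(c),\phi(c')<\delta$.

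The point of this sandwich is that the problematic element $a\in\mathcal A$ is now squeezed between elements of $\mathcal B$ with small $\phi$-value, so the estimate can be carried out entirely inside $\mathcal B$. Since $\phi$ is a pure state of $\mathcal B$ it can be excised: there is $b\in E$ with $\|bcb-\phi(c)b^2\|<\delta$ and $\|bc'b-\phi(c')b^2\|<\delta$ simultaneously, so that $\|bcb\|,\|bc'b\|<2\delta$. Compressing the sandwich by $b$ gives $-2\delta\le -bc'b\le bab\le bcb\le 2\delta$, hence $\|bab\|\le 2\delta$. For general $a$ with $\psi(a)=0$ one writes $a=x+iy$ with $x,y$ self-adjoint and $\psi(x)=\psi(y)=0$, excises the four resulting $\mathcal B$-majorants with a single $b$, and concludes $\|bab\|\le\|bxb\|+\|byb\|\le 4\delta$; letting $\delta\to0$ establishes $(*)$. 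The main obstacle is exactly this direction: the leverage of the uniqueness hypothesis is felt only through the duality formula, which converts ``$\psi$ is the unique extension'' into the existence of tight $\mathcal B$-majorants of $a$ with vanishing $\phi$-mass, after which the pure-state excision of $\phi$ \emph{inside} $\mathcal B$ finishes the proof.
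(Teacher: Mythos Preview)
The paper does not supply its own proof of this theorem: it is quoted from \cite{And}, and only the implication (ii)$\Rightarrow$(i) together with the ``moreover'' clause is actually argued, in the Comment immediately following the statement. Your proof of (ii)$\Rightarrow$(i) is essentially identical to that Comment: both use Cauchy--Schwarz to show that any $b\in E$ satisfies $\omega(bab)=\omega(a)$ and $\omega(b^2)=1$ for every $\omega\in S_\phi(\mathcal A)$, forcing $\omega=\psi$.

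For (i)$\Rightarrow$(ii) you go beyond what the paper does. Your route---the duality formula $\sup_{\omega\in S_\phi(\mathcal A)}\omega(a)=\inf\{\phi(c):c\in\mathcal B_{sa},\ c\ge a\}$, followed by compression with an excising $b\in E$ for $\phi$ inside $\mathcal B$---is correct and is a clean modern argument. One point should be made explicit to avoid circularity: the excision step (the existence of $b\in E$ with $\|bcb-\phi(c)b^2\|$ small for prescribed $c\in\mathcal B$) is precisely the conclusion (ii) of the theorem in the degenerate case $\mathcal A=\mathcal B$, so you must invoke it from an independent source (Akemann--Anderson--Pedersen excision of pure states; or, for the abelian $\mathcal B$ that is all the paper ever needs, the trivial Gelfand-theoretic version). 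With that citation in place your argument is complete. It is not Anderson's original 1979 proof, which predates the general excision theorem, but it is shorter and conceptually transparent: uniqueness of the extension is converted, via Hahn--Banach, into the existence of $\mathcal B$-majorants with small $\phi$-mass, and then everything happens inside $\mathcal B$.
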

\begin{comment}
Condition (ii) implies both (i) and the uniqueness of $\psi$, even in the non-unital case. Indeed, if $\psi$ satisfies condition $(*)$, then we can define, for each $a\in\mathcal{A}$ some sequence $\{b_n(a)\}_{n=1}^\infty\subset\mathcal{B}$ of positive elements with $\|b_n(a)\|=\phi(b_n(a))=1$, such that
$$\lim_{n\to\infty}\|b_n(a)ab_n(a)-\psi(a)b_n(a)^2\|=0,$$
so for every $\eta\in S_\phi(\mathcal{A})$ we will get
$$\lim_{n\to\infty}\big[\eta(b_n(a)ab_n(a))-\psi(a)\phi(b_n(a)^2)\big]=0.$$
Since for any positive element $b\in\mathcal{B}$, by the Cauchy-Schwartz Inequality we have $\phi(b)^2\leq\|\phi\|\cdot\phi(b^2)$, it follows that
$\phi(b_n(a)^2)=1$, so the above equality yields
\begin{equation}
\psi(a)=\lim_{n\to\infty}\eta(b_n(a)ab_n(a)).
\label{comment}
\end{equation}
Since for every $\eta\in S(\mathcal{A})$ and every positive element $x\in\mathcal{A}$
with $\|x\|=\eta(x)=1$, we have (again using Cauchy-Schwartz) $\eta((1-x)a)=\eta(a(1-x))=0$, thus $\eta(a)=\eta(xax)$, $\forall\,a\in\mathcal{A}$,
we see that $\eta(b_n(a)ab_n(a))=\eta(a)$, $\forall\,\eta\in S_\phi(\mathcal{A})$, so  the equality \eqref{comment} forces $S_\phi(\mathcal{A})=\{\psi\}$.
\end{comment}

An application of the above Theorem, which perhaps clarifies the status of the set $P_1(\mathcal{B}\!\uparrow\!\mathcal{A})$ a little better, in the case when
$\mathcal{B}$ is {\em abelian}, is as follows. (Although this result may be known, we were not able to find a proof in the literature, so one is supplied here, for the reader's convenience.)

\begin{proposition} \label{net}
Assume $\mathcal{B}$ is an abelian (possibly non-unital) C*-subalgebra of a
(possibly non-unital) C*-algebra $\mathcal{A}$. For a pure state $\phi\in P(\mathcal{B})$,
conditions {\rm (i)} and {\rm (ii)} from Theorem 1.1 are also equivalent to the condition
\begin{itemize}
\item[(ii')] there exists a map $\psi:\mathcal{A}\to\mathbb{C}$, and a net
$(b^{}_\lambda)_{\lambda\in\Lambda}$ of positive elements in $\mathcal{B}$, such that
$\|b^{}_\lambda\|=\phi(b^{}_\lambda)=1$, $\forall\,\lambda\in\Lambda$, and
$\lim_{\lambda\in\Lambda}\|b^{}_\lambda ab^{}_\lambda -\psi(a)b^2_\lambda\|=0$, $\forall\,a\in\mathcal{A}$.
\end{itemize}
As in Theorem 1.1, the map $\psi$ is the unique (pure) state extension of $\phi$.
\end{proposition}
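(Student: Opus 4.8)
The plan is to prove the cycle (i) $\Rightarrow$ (ii$'$) $\Rightarrow$ (ii) $\Rightarrow$ (i), so that the three conditions become equivalent; the last two links are essentially free and the work is concentrated in (i) $\Rightarrow$ (ii$'$). For (ii$'$) $\Rightarrow$ (ii), fix $a$ and $\varepsilon>0$; since $\lim_{\lambda}\|b_\lambda a b_\lambda-\psi(a)b_\lambda^2\|=0$, some single $b=b_\lambda$ already satisfies $(*)$, and this element is positive with $\|b\|=\phi(b)=1$, so the same $\psi$ witnesses (ii). For (ii) $\Rightarrow$ (i) together with the uniqueness of $\psi$ as a pure state extension, I would simply quote the Comment following Theorem~1.1, which was observed there to hold with no unitality hypothesis. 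Thus only (i) $\Rightarrow$ (ii$'$) requires a genuine argument, and this is where abelianness of $\mathcal{B}$ is used.

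The single place commutativity enters I would isolate as a short multiplication lemma. Call a positive $p\in\mathcal{B}$ a \emph{peak} if $\|p\|=\phi(p)=1$; identifying $\mathcal{B}\cong C_0(X)$ and $\phi$ with evaluation at a point $x_0\in X$, peaks are exactly the positive $p$ with $p(x_0)=\|p\|=1$, so a product of peaks is again a peak. The lemma is that if $c=hp$ with $p$ a peak, $h\in\mathcal{B}$, $\|h\|\le1$, then, writing $c=hp=ph$, one gets $cac=h(pap)h$ and $c^2=hp^2h$, hence $\|cac-\psi(a)c^2\|=\|h(pap-\psi(a)p^2)h\|\le\|pap-\psi(a)p^2\|$. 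In words: multiplying a good approximating peak by any contraction from $\mathcal{B}$ preserves the estimate, and if the extra factor is itself a peak it preserves the normalization as well.

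For (i) $\Rightarrow$ (ii$'$) I would first reduce to the unital case covered by Theorem~1.1. Pass to the unitization $\tilde{\mathcal{A}}$, put $\mathcal{B}_1=\mathcal{B}+\mathbb{C}1\subset\tilde{\mathcal{A}}$, and extend $\phi$ to the character $\phi_1$ on $\mathcal{B}_1$. Since $\omega\mapsto\omega|_{\mathcal{A}}$ is a bijection of $S_{\phi_1}(\tilde{\mathcal{A}})$ onto $S_\phi(\mathcal{A})$ (with inverse the canonical extension), condition (i) transfers to $\phi_1\in P_1(\mathcal{B}_1\!\uparrow\!\tilde{\mathcal{A}})$, and Theorem~1.1 supplies a $\psi_1$ satisfying $(*)$; set $\psi=\psi_1|_{\mathcal{A}}$, which is then the unique pure state extension of $\phi$. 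The peaks produced by $(*)$ lie in $\mathcal{B}_1$, not $\mathcal{B}$, but because $\phi\neq0$ the point $x_0$ is a genuine point of $X$, so local compactness yields a peak $g\in C_0(X)=\mathcal{B}$. Now index a net by the directed set of pairs $\lambda=(F,\varepsilon)$ with $F=\{a_1,\dots,a_n\}\subset\mathcal{A}$ finite and $\varepsilon>0$, ordered by inclusion and reverse magnitude; for each $i$ choose via $(*)$ a peak $b_i\in\mathcal{B}_1$ with $\|b_ia_ib_i-\psi(a_i)b_i^2\|<\varepsilon$, and set $b_\lambda=g\,b_1\cdots b_n$. The single factor $g$ pulls this product into $\mathcal{B}$ and keeps it a peak, and applying the lemma with $h=g\prod_{j\neq i}b_j$ gives $\|b_\lambda a_i b_\lambda-\psi(a_i)b_\lambda^2\|<\varepsilon$ for every $i$. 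This net manifestly verifies (ii$'$).

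The deep step --- that uniqueness of the extension forces the approximate localization $(*)$ --- is Anderson's Theorem~1.1, which I am free to cite, so the real obstacle here is organizational rather than analytic: transporting $P_1$ faithfully through the unitization and, above all, converting the peaks delivered in $\mathcal{B}_1$ into peaks lying in $\mathcal{B}$ while merging the individual estimates into one net. Both tasks are precisely what the multiplication lemma accomplishes, so the crux is the recognition that abelianness is exactly what permits peaks to be multiplied at will. The one subtlety to keep in mind is that $x_0$ must be an honest point of $X$ (equivalently, that $\phi$, being a state, does not vanish), which is what guarantees the existence of the auxiliary peak $g\in\mathcal{B}$.
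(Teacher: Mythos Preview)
Your proposal is correct and follows essentially the same route as the paper: unitize, invoke Theorem~1.1 to obtain peaks in $\tilde{\mathcal{B}}$, multiply them (using commutativity) over a finite set indexed by $(\mathcal{F},\varepsilon)$, and finally multiply by a fixed peak $g\in\mathcal{B}$ to land back in $\mathcal{B}$. The only cosmetic difference is that you package the key estimate as a separate ``multiplication lemma'' (and fold $g$ into the product immediately), whereas the paper simply says ``by suitably multiplying both sides''; note that in your lemma the contraction $h$ should be taken in $\mathcal{B}_1$ rather than $\mathcal{B}$, since that is how you actually apply it.
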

\begin{proof}
Since the implication $(ii')\Rightarrow (ii)$ is trivial, while the implication
$(ii)\Rightarrow (i)$ has been discussed above, we only need to consider the implication
$(i)\Rightarrow (ii')$. Assume $\phi\in P(\mathcal{A})$ has a unique state
extension $\psi\in S(\mathcal{A})$.
Let us adjoin a (new) unit $\mathbf{1}$ to both $\mathcal{A}$ and $\mathcal{B}$, and let us consider the canonical extensions $\tilde{\phi}\in S(\tilde{\mathcal{B}})$ and $\tilde{\psi}\in
S(\tilde{\mathcal{A}})$ of $\phi$ and $\psi$ respectively, to the unitized algebras. Of course,
$\tilde{\phi}$ is a pure state on $\tilde{\mathcal{B}}$, and $\tilde{\psi}$ is its unique extension
to a (pure) state on $\tilde{\mathcal{A}}$, so we can now apply Theorem 1.1 to $\tilde{\phi}$ and $\tilde{\psi}$.
Choose then, for every $a\in\mathcal{A}$ and every $\varepsilon>0$, a positive element
$x^{}_{a,\varepsilon}\in \tilde{\mathcal{B}}$, so that
$\|x^{}_{a,\varepsilon}\|=\tilde{\phi}(x^{}_{a,\varepsilon})=1$, and such that
\begin{equation}
\|x^{}_{a,\varepsilon}ax^{}_{a,\varepsilon}-\psi(a)x_{a,\varepsilon}^2\|<\varepsilon.
\label{prop1-a}
\end{equation}
(Note that, since we only use $a\in\mathcal{A}$, $\tilde{\psi}$ is replaced by $\psi$.)

We define, for every finite set $\mathcal{F}=\{a_1,\dots,a_n\}\subset
\mathcal{A}$ and any $\varepsilon>0$, the element
$x^{}_{\mathcal{F},\varepsilon}=\prod_{j=1}^nx^{}_{a_j,\varepsilon}\in\tilde{\mathcal{B}}$. Since
$\tilde{\mathcal{B}}$ is abelian, this product is unambiguously defined and
positive. Furthermore, since all the
$x^{}_{a_j,\varepsilon}$'s have norm $1$, by suitably multiplying both sides of \eqref{prop1-a}, we also get
\begin{equation}
\|x^{}_{\mathcal{F},\varepsilon}ax^{}_{\mathcal{F},\varepsilon}-\psi(a)x_{\mathcal{F},\varepsilon}^2\|<\varepsilon,
\,\,\,\forall\,a\in\mathcal{F},
\label{prop1-b}
\end{equation}
with all $x^{}_{\mathcal{F},\varepsilon}$ having norm $\leq 1$.

Of course, since $\tilde{\phi}$ is a pure state on $\tilde{\mathcal{B}}$, which is abelian, it follows that
$\tilde{\phi}$ is multiplicative on $\tilde{\mathcal{B}}$, so in particular we also have
$\tilde{\phi}(x^{}_{\mathcal{F},\varepsilon})=1$, which in turn forces
$\|x^{}_{\mathcal{F},\varepsilon}\|=1$.

Up to this point, we almost have all that we want: a net $(x_\lambda)_{\lambda\in\Lambda}$ of positive
elements in $\tilde{\mathcal{B}}$, such that
\begin{itemize}
\item[(a)] $\|x^{}_\lambda\|=\tilde{\phi}(x^{}_\lambda)=1$, $\forall\,\lambda\in\Lambda$;
\item[(b)] $\lim_{\lambda\in\Lambda}\|x^{}_\lambda ax^{}_\lambda -\psi(a)x^2_\lambda\|=0$, $\forall\,a\in\mathcal{A}$.
\end{itemize}
To produce the desired $b_\lambda$'s (in $\mathcal{B}$), we regard $\mathcal{B}$ as $C_0(\Omega)$, for some
locally compact space $\Omega$, so $\phi$ will be an evaluation map $\phi:C_0(\Omega)\ni f\longmapsto f(\omega)\in\mathbb{C}$,
and then, if we take $f\in \mathcal{B}$ to be any positive element with
$\|f\|=1=f(\omega)$, then the assignment
$b_\lambda=fx_\lambda$ will do the job.
\end{proof}

\section{Pseudo-diagonals}

One key condition we wish to isolate is as follows.
\begin{definition}
Assume $\mathcal{A}$ is a C*-algebra. A C*-subalgebra $\mathcal{B}\subset\mathcal{A}$ is said to have the
{\em Almost Extension Property}, if the set $P_1(\mathcal{B}\!\uparrow\!\mathcal{A})$ is weak*-dense in $P(\mathcal{B})$.

Let us say that $\mathcal{B} \subseteq \mathcal{A}$ has the (honest) Extension Property if $P_1(\mathcal{B}\!\uparrow\!\mathcal{A}) =P(\mathcal{B})$; of course, if $\mathcal{B}$ has the Extension Property, then it also has the Almost Extension Property.
\end{definition}

\begin{example}
Given any set $\Sigma$, the C*-algebra $\ell^\infty(\Sigma)$, viewed as a C*-subalgebra of
$\mathcal{B}(\ell^2(\Sigma))$ (as multiplication operators), has the Almost Extension Property, since
all pure states of the form $ev_s:\ell^\infty(\Sigma)\ni f\longmapsto f(s)\in\mathbb{C}$, $s\in\Sigma$, have unique state extensions to $\mathcal{B}(\ell^2(\Sigma))$, and the set $\{ev_s\,:\,s\in\Sigma\}$ is dense in
$P(\ell^\infty(\Sigma))=\beta\Sigma$ (the Stone-Cech compactification of $\Sigma$). The famous Kadison-Singer Problem asks whether the inclusion $\ell^\infty(\mathbb{N})\subset\mathcal{B}(\ell^2(\mathbb{N}))$ has in fact the (honest) Extension Property.
\end{example}
\begin{remark}
Using the functorial property {\sc (f)} from Remark~\ref{func}, it is immediate that, whenever
 $\mathcal{B}\subset\mathcal{A}$ has the
honest Extension Property, and $\pi:\mathcal{A}\to\mathcal{M}$ is a $*$-homomorphism, it follows that $\pi(\mathcal{B})\subset\pi(\mathcal{A})$ also has the honest Extension Property.

This implication may fail, if we use the Almost Extension Property, as seen for instance in Example~\ref{Ctorus} below. However, again using Remark~\ref{func}, the implication does hold, if
$\pi$ is injective on $\mathcal{B}$.
\end{remark}

As Anderson has shown, in the case when $\mathcal{B}$ is an {\em abelian\/} C*-subalgebra of $\mathcal{A}$ with the
(honest) Extension Property, then there exists a conditional expectation of $\mathcal{A}$ onto $\mathcal{B}$, which is in fact unique.
This may no longer be the case if we consider only the Almost Extension Property, as seen in the following.

\begin{example} \label{Ctorus}
Consider $\mathcal{A}=C([0,1])$ (the algebra of continuous functions on $[0,1]$) and the algebra
$C(\mathbb{T})$ (the algebra of continuous functions on the unit circle), identified as
as C*-subalgebra of $\mathcal{A}$ as:
$$\mathcal{B}=\{f\in\mathcal{A}\,:\,f(0)=f(1)\}.$$
Obviously, all evaluation maps $ev_s:C(\mathbb{T})\ni f\longmapsto f(s)\in\mathbb{C}$, $s\in (0,1)$, have unique extensions to states on $C([0,1])$, but there is no conditional expectation of
$C([0,1])$ onto $\mathcal{B}$.

If we consider the $*$-homomorphism $\pi:\mathcal{A}\ni f\longmapsto (f(0),f(1))\in\mathbb{C}\oplus\mathbb{C}$, then obviously the inclusion
$\pi(\mathcal{B})\subset\pi(\mathcal{A})$ fails to have the Almost Extension Property.
\end{example}

\begin{remark}
If $\mathcal{B}\subset\mathcal{A}$ has the Almost Extension Property, and there exists a conditional expectation
of $\mathcal{A}$ onto $\mathcal{B}$, then it is unique. Indeed, if $E,F:\mathcal{A}\to\mathcal{B}$ are two conditional expectations, then for  $\phi\in P_1(\mathcal{B}\!\uparrow\!\mathcal{A})$, we have
$\phi\circ E=\phi\circ F$ (by uniqueness of state extensions), so for every $a\in\mathcal{A}$ we have
$\phi\big(E(a)-F(a)\big)=0$, $\forall\,\phi\in P_1(\mathcal{B}\!\uparrow\!\mathcal{A})$. By density this forces
$\phi\big(E(a)-F(a)\big)=0$, $\forall\,\phi\in P(\mathcal{B})$, and then we get
$$\phi\big(E(a)-F(a)\big)=0,\,\,\,\forall\,a\in\mathcal{A},\,\phi\in\mathcal{A}^*,$$
which forces $E=F$.
\end{remark}
\begin{definition} In the case where $\mathcal{B}\subset\mathcal{A}$ has the Almost Extension Property and there exists a conditional expectation of $\mathcal{A}$ onto $\mathcal{B}$, we declare the inclusion $\mathcal{B}\subset\mathcal{A}$ to have the {\em Canonical Almost Extension Property\/}, and the (unique) conditional expectation $E$ will be called the {\em associated expectation}.
\end{definition}

\begin{comment}
If $\mathcal{B}\subset\mathcal{A}$ has the Almost Extension Property, the ``candidate'' for the associated
conditional expectation $E$ is constructed as follows. Start off by considering the unique state extension as
a map
$\theta_0:P_1(\mathcal{B}\!\uparrow\!\mathcal{A})\to\mathcal{A}^*$, and define its natural linear continuous extension
$\theta:\ell^1(P_1(\mathcal{B}\!\uparrow\!\mathcal{A}))\to\mathcal{A}^*$. Then we take the dual map
\begin{equation}
\theta^*:\mathcal{A}^{**}\to\ell^\infty(P_1(\mathcal{B}\!\uparrow\!\mathcal{A})),
\label{theta}
\end{equation}
together with the
embedding
\begin{equation}
J:\mathcal{B}\ni b\longmapsto(\phi(b))_{\phi\in P_1(\mathcal{B}\!\uparrow\!\mathcal{A})}\in \ell^\infty(P_1(\mathcal{B}\!\uparrow\!\mathcal{A})).
\label{J}
\end{equation}
If $\theta^*(\mathcal{A})\subset\text{Range}\,J$, then $E$ is simply $J^{-1}\circ \theta^*\big|_{\mathcal{A}}$.
Note that both maps \eqref{theta} and \eqref{J} are completely positive (because they are positive and take values in a commutative C*-algebra), and thus they are both contractive.
\end{comment}

\begin{remark}
In the case when $\mathcal{B}$ is {\em abelian}, the map $J$ defined by \eqref{J} is an isometric $*$-homo\-mor\-phism, so by Tomiyama's Theorem \cite{To}, the condition $\theta^*(\mathcal{A})\subset\text{Range}\,J$ is sufficient to conclude that $E$ is a conditional expectation.
\end{remark}

For future use, let us record the following technical result.
\begin{lemma} \label{seminorm}
Assume $\mathcal{B}$ is abelian and we have an inclusion $\mathcal{B}\subset\mathcal{A}$ with the Canonical Almost Extension Property, with associated expectation $E$. If we consider the seminorm $p:\mathcal{A}\to [0,\infty)$,
defined by
$$p(a)=\sup\left\{\|ab-ba\|\,:\,b\in\mathcal{B},\,\|b\|\leq 1\right\},\,\,\,a\in\mathcal{A},$$
then
\begin{equation}
\|E(a)^2-E(a^2)\|\leq 2\|a\|\cdot p(a),\,\,\,\forall\,a\in\mathcal{A}.
\label{lemma1}
\end{equation}
\end{lemma}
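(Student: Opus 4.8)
The plan is to estimate the norm of $c := E(a)^2 - E(a^2)$, which lives in the abelian algebra $\mathcal{B}$, by testing against pure states. Since $\mathcal{B}$ is commutative, every element $c\in\mathcal{B}$ satisfies $\|c\|=\sup_{\phi\in P(\mathcal{B})}|\phi(c)|$, and because the map $\phi\mapsto|\phi(c)|$ is weak*-continuous while $P_1(\mathcal{B}\!\uparrow\!\mathcal{A})$ is weak*-dense in $P(\mathcal{B})$ (the Almost Extension Property), it suffices to bound $|\phi(c)|$ by $2\|a\|\,p(a)$ for every $\phi\in P_1(\mathcal{B}\!\uparrow\!\mathcal{A})$. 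Fix such a $\phi$ and let $\psi$ be its unique state extension. Since $\phi\circ E$ is a state on $\mathcal{A}$ restricting to $\phi$ on $\mathcal{B}$, uniqueness forces $\psi=\phi\circ E$; and since $\phi$ is a pure state on the abelian algebra $\mathcal{B}$ it is multiplicative, so $\phi(E(a)^2)=\phi(E(a))^2=\psi(a)^2$ and $\phi(E(a^2))=\psi(a^2)$. Thus $\phi(c)=\psi(a)^2-\psi(a^2)$, and the whole problem reduces to showing $|\psi(a)^2-\psi(a^2)|\le 2\|a\|\,p(a)$.

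For the core estimate I would invoke the net characterization of Proposition~\ref{net}: choose positive $b_\lambda\in\mathcal{B}$ with $\|b_\lambda\|=\phi(b_\lambda)=1$ and $r_\lambda:=b_\lambda ab_\lambda-\psi(a)b_\lambda^2\to 0$ in norm. Writing $c_\lambda:=ab_\lambda-b_\lambda a$ for the commutator, the definition of $p$ gives $\|c_\lambda\|\le p(a)$ (as $\|b_\lambda\|\le 1$). The idea is to expand $b_\lambda a^2 b_\lambda=(b_\lambda ab_\lambda)a+b_\lambda ac_\lambda$ and then use $b_\lambda ab_\lambda=\psi(a)b_\lambda^2+r_\lambda$ together with $b_\lambda^2 a=b_\lambda ab_\lambda-b_\lambda c_\lambda$ to rewrite everything in terms of $b_\lambda^2$, the vanishing remainders $r_\lambda$, and the commutator $c_\lambda$. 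Applying $\psi$, noting $\psi(b_\lambda^2)=\phi(b_\lambda^2)=1$ (multiplicativity of $\phi$) and that the terms containing $r_\lambda$ vanish in the limit, while $\psi(b_\lambda a^2 b_\lambda)\to\psi(a^2)$ by Proposition~\ref{net} applied to $a^2$, one is left with the identity
$$\psi(a)^2-\psi(a^2)=\psi(a)\lim_\lambda\psi(b_\lambda c_\lambda)-\lim_\lambda\psi(b_\lambda ac_\lambda).$$
The two right-hand terms are then bounded by $\|a\|\,p(a)$ each, using $|\psi(b_\lambda c_\lambda)|\le\|b_\lambda\|\,\|c_\lambda\|\le p(a)$, $|\psi(a)|\le\|a\|$, and $|\psi(b_\lambda ac_\lambda)|\le\|b_\lambda\|\,\|a\|\,\|c_\lambda\|\le\|a\|\,p(a)$, which yields $|\psi(a)^2-\psi(a^2)|\le 2\|a\|\,p(a)$.

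Combining the two steps gives $|\phi(c)|\le 2\|a\|\,p(a)$ for all $\phi\in P_1(\mathcal{B}\!\uparrow\!\mathcal{A})$, and the density/continuity argument of the first paragraph then upgrades this to $\|c\|\le 2\|a\|\,p(a)$, which is precisely \eqref{lemma1}. I expect the main obstacle to be the bookkeeping in the expansion of $b_\lambda a^2 b_\lambda$: one must carefully track where the commutator $c_\lambda$ and the remainder $r_\lambda$ enter, so that after applying $\psi$ and passing to the limit exactly the two commutator terms survive while every other contribution either reproduces $\psi(a)^2$ or vanishes. The remaining points — that $\psi=\phi\circ E$, that $\phi$ is multiplicative, and that $\psi(b_\lambda^2)=1$ — are routine once the reduction to pure states is in place.
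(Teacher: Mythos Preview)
Your argument is correct and follows the same overall strategy as the paper: reduce to $\phi\in P_1(\mathcal{B}\!\uparrow\!\mathcal{A})$ by density, identify $\psi=\phi\circ E$, invoke the net $(b_\lambda)$ from Proposition~\ref{net}, and then use commutator estimates to bound $|\psi(a)^2-\psi(a^2)|$ by $2\|a\|\,p(a)$.

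The only noteworthy difference is in the bookkeeping of that last step. The paper stays at the norm level longer: it squares to compare $b_\lambda^2 a^2 b_\lambda^2$ with $(b_\lambda a b_\lambda)^2$, bounds their difference by $2\|a\|\,p(a)$, and only applies $\phi$ at the very end. Your route is a bit more direct: you expand $b_\lambda a^2 b_\lambda$ once, apply $\psi$ immediately, and read off the bound from the surviving commutator terms. Both arrive at the same inequality with the same constant. One small imprecision: you write the answer as a difference of two individual limits $\psi(a)\lim_\lambda\psi(b_\lambda c_\lambda)-\lim_\lambda\psi(b_\lambda a c_\lambda)$, but you have not shown these limits exist separately. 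This is harmless, since the estimate $|\psi(a)\psi(b_\lambda c_\lambda)-\psi(b_\lambda a c_\lambda)|\le 2\|a\|\,p(a)$ holds for every $\lambda$, and it is only the combined expression whose limit you need (and that limit is forced to equal $\psi(a)^2-\psi(a^2)$ by the rest of your identity).
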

\begin{proof}
Fix $a\in \mathcal{A}$, as well as some $\phi\in P_1(\mathcal{B}\uparrow\mathcal{A})$, so that the unique state on $\mathcal{A}$ that extends $\phi$ is $\phi\circ E$.
According to Proposition 1.2,
there exists a net $(b^{}_\lambda)_{\lambda\in\Lambda}$ of positive elements in $\mathcal{B}$, with $\|b^{}_\lambda\|=\phi(b^{}_\lambda)=1$, so that
\begin{align}
\lim_{\lambda\in\Lambda}\|b^{}_\lambda a b^{}_\lambda-\phi(E(a))b^2_\lambda\|&=0;\label{lema1-1}\\
\lim_{\lambda\in\Lambda}\|b^{}_\lambda a^2 b^{}_\lambda-\phi(E(a^2))b^2_\lambda\|&= 0.\label{lema1-2}
\end{align}
One the one hand, since we have
\begin{align*}
&\|[b_\lambda^{}ab_\lambda^{}]^2-[\phi(E(a))b_\lambda^2]^2\|=\\
&\qquad=\|b^{}_\lambda ab_\lambda^{}[
b^{}_\lambda ab_\lambda^{}-\phi(E(a))b_\lambda^2]+[
b^{}_\lambda ab_\lambda^{}-\phi(E(a))b_\lambda^2]\phi(E(a))b_\lambda^2\|\leq\\
&\qquad \leq \left[\|b^{}_\lambda ab_\lambda^{}\|+\|\phi(E(a))b_\lambda^2\|\right]\cdot\|b^{}_\lambda ab_\lambda^{}-\phi(E(a))b_\lambda^2\|\leq \\
&\qquad\leq 2\|a\|\cdot\|b^{}_\lambda ab_\lambda^{}-\phi(E(a))b_\lambda^2\|,\end{align*}
we also get
\begin{equation}
\lim_{\lambda\in\Lambda}\|(b_\lambda^{}ab_\lambda^{})^2-[\phi(E(a))b_\lambda^2]^2\|=0.
\label{lema1-3}
\end{equation}
On the other hand, if we multiply (inside the norm) the left-hand side of
\eqref{lema1-2} on each side by $b^{}_\lambda$, we also get
\begin{equation}
\lim_{\lambda\in\Lambda}\|b^2_\lambda a^2 b^2_\lambda-\phi(E(a^2))b^4_\lambda\|= 0.\label{lema1-4}
\end{equation}
Finally, since we have
\begin{align*}
&\|b^2_\lambda a^2 b^2_\lambda-[b_\lambda^{}ab_\lambda^{}]^2\|=
\|b_\lambda^2 a[ab^{}_\lambda-b^{}_\lambda a]b^{}_\lambda-
b^{}_\lambda[ab^{}_\lambda-b^{}_\lambda a]b^{}_\lambda ab^{}_\lambda\|\leq\\
&\qquad\leq\left[\|b_\lambda^2 a\|\cdot\|b^{}_\lambda\|+\|b^{}_\lambda\|\cdot\|b^{}_\lambda a b^{}_\lambda\|\right]\cdot\|ab^{}_\lambda-b^{}_\lambda a\|\leq
2\|a\|\cdot p(a),
\end{align*}
we now get
\begin{align*}
&\|[\phi(E(a))b^2_\lambda]^2-\phi(E(a^2))b^4_\lambda\|\leq
\|b^2_\lambda a^2 b^2_\lambda-\phi(E(a^2))b^4_\lambda\|+\\
&\qquad\qquad+\|[b^{}_\lambda ab^{}_\lambda]^2-[\phi(E(a))b^2]^2\|+\|b^2_\lambda a^2 b^2_\lambda-
[b^{}_\lambda a b{}_\lambda]^2\|\leq\\
&\qquad\leq
\|b^2_\lambda a^2 b^2_\lambda-\phi(E(a^2))b^4_\lambda\|+\|(b^{}_\lambda ab^{}_\lambda)^2-[\phi(E(a))b^2]^2\|+2\|a\|\cdot p(a),
\end{align*}
so when we use
 \eqref{lema1-3} and \eqref{lema1-4}, we
get
\begin{equation}
\limsup_{\lambda\in\Lambda}\|[\phi(E(a))b^2_\lambda]^2-\phi(E(a^2))b^4_\lambda\|\leq
2\|a\|\cdot p(a).
\label{ineq1}
\end{equation}
Of course, when we apply $\phi$ to the quantity inside the norm, and using the obvious equality $\phi(b_\lambda^4)=\phi(b_\lambda^{})^4=1$, we get
$$\phi([\phi(E(a))b^2_\lambda]^2-\phi(E(a^2))^2b^4_\lambda)=\phi(E(a^2)-E(a)^2),$$
so the inequality \eqref{ineq1} yields
$$|\phi(E(a^2)-E(a)^2)|\leq 2\|a\|\cdot p(a).$$
If we take supremum over all $\phi\in P_1(\mathcal{B}\uparrow \mathcal{A})$, by density, we get \eqref{lemma1}.
\end{proof}

\begin{definition}
Suppose $\mathcal{A}$ is a C*-algebra. An abelian C*-subalgebra $\mathcal{B}\subset\mathcal{A}$ is called a {\em pseudo-diagonal in $\mathcal{A}$}, if
\begin{itemize}
\item[(i)] the inclusion $\mathcal{B}\subset\mathcal{A}$ has the Canonical Almost Extension Property;
\item[(ii)] the associated expectation $E:\mathcal{A}\to\mathcal{B}$ is {\em faithful}: i.e., it satisfies the implication
$E(a^*a)=0\Rightarrow a=0$.
\end{itemize}
\end{definition}

Condition (i) alone is not sufficient, as seen in the following

\begin{example} \label{Enotfaithfulexample}
Consider $\mathcal{B}=C([0,1])$, the characteristic function $f_0=\chi_{\{0\}}\in\ell^\infty([0,1])$, and the C*-subalgebra $\mathcal{A}=\mathcal{B}+\mathbb{C}f_0$, which can be characterized as the algebra of all bounded functions on $[0,1]$ that are continuous on $(0,1]$ and have a limit
at $0$. As it turns out, the inclusion $\mathcal{B}\subset\mathcal{A}$ has the Canonical Almost Extension Property. Indeed, all maps $ev_s\in P(\mathcal{B})$, $s\in (0,1]$ have unique state extensions, and we do have a conditional expectation $E:\mathcal{A}\to\mathcal{B}$, defined as
$$(E(f))(t)=\left\{\begin{array}{l}f(t)\text{, if }t>0\\
\lim_{s\to 0}f(s)\text{, if }t=0\end{array}\right.$$
However, $E$ fails to be faithful, since $E(f_0)=0$.
\end{example}

\begin{comment}
The use of the term ``pseudo-diagonal'' in our definition is meant to hint towards Kumjian's definition of so-called {\em C*-diagonals}, which includes the existence of a faithful conditional expectation of $\mathcal{A}$ onto $\mathcal{B}$, plus a certain feature of the normalizer set
\begin{equation}
\mathcal{N}(\mathcal{B})=\{a\in\mathcal{A}\,:\,a\mathcal{B}a^*\cup a^*\mathcal{B}a\subset\mathcal{B}\}.
\label{norm}
\end{equation}
(Actually, Kumjian uses a certain distinguished subset in
$\mathcal{N}(\mathcal{B})$, which he requires to generate $\mathcal{A}$ as a C*-algebra.) As shown in \cite{Kum}, C*-diagonals have in fact the (honest) Extension Property, thus C*-diagonals are in fact pseudo-diagonals.
\end{comment}

\begin{example} \label{coreex}
Assume $\mathcal{A}=C^*(E)$, the graph C*-algebra of some countable
graph $E=(E^0,E^1)$, which is the universal C*-algebra generated by
$\{P_v\}_{v\in E^0}\cup\{S^{}_e\}_{e\in E^1}$, subject to the Cuntz-Krieger relations
\begin{align}
&P^*_v=P^{}_v=P^2_v,\,\,\, \forall\,v\in \mathcal{E}^0;\label{CK1}\\
& S_e^*S^{}_e=P_{s(e)},\,\,\, \forall\,e\in \mathcal{E}^1;\label{CK2}\\
& S_e^*S^{}_f=0\text{, for any two {\em distinct\/} edges }e,f\in\mathcal{E}^1\label{CK3}\\
&\text{if $v\in \mathcal{E}^0$ has $r^{-1}(v)$ {\em non-empty and finite}, then }S^{}_v=\sum_{e\in r_1^{-1}(v)}S^{}_eS_e^*.\label{CK4}
\end{align}
(Here $s,r:E^1\to E^0$ denote the source and range maps.)
Define for every path $\alpha=e_1e_2\dots e_n$ (the
convention here is as in \cite{Ra}, i.e., $s(e_i)=r(e_{i+1})$,
$\forall\,i$) the projection $R_\alpha=S^{}_{e_1}\cdots
S^{}_{e_n}S^*_{e_n}\cdots S^*_{e_1}$ (with the convention that if
$\alpha=v\in E^0$, i.e., $\alpha$ is a path of length zero, then
$R_\alpha=P_v$), and define the ``diagonal'' algebra $\Delta(E)$ to be the
(abelian) C*-subalgebra generated by all $R_\alpha$'s.

As it turns out, $\Delta(E)$ is a pseudo-diagonal if and only if the
following condition is satisfied:
\begin{itemize}
\item[(L)] every cycle in $E$ has an entry.
\end{itemize}
As pointed out in \cite{NaRe}, in the absence of condition (L),
the correct substitute for $\Delta(E)$ is its commutant
$$M(E)=\{x\in C^*(E)\,:\,xa=ax,\,\,\forall\,a\in \Delta(E)\},$$
which turns out to be a pseudo-diagonal.
\end{example}
\begin{comment}
It should be pointed out here that, even in very simple cases, such as the one described
below, the pseudo-diagonal $M(E)$ may fail to have the (honest) Extension Property, so
there are instances when $P_1(\mathcal{B}\!\uparrow\!\mathcal{A})$, although
dense in $P(\mathcal{B})$, fails to coincide with $P(\mathcal{B})$.
In other words, there are pseudo-diagonals which are not C*-diagonals.

For instance, if one considers the Toeplitz C*-algebra $\mathcal{T}$
(which can be identified as $C^*(E)$ for a graph with two vertices $v$ and $w$, and two
edges $e$ and $f$ with $s(e)=v$ and $r(e)=s(f)=r(f)=w$)
the pseudo-diagonal $\mathcal{B}=\Delta(E)=M(E)$ is identified with
$C(\mathbb{N}\cup\{\infty\})$, where
$\mathbb{N}\cup\{\infty\}$ is the Alexandrov compactification of $\mathbb{N}$,
and $P_1(\mathcal{B}\!\uparrow\!\mathcal{T})=\mathbb{N}\subsetneq
\mathbb{N}\cup\{\infty\}=P(\mathcal{B})$.
 \end{comment}


In preparation for our next example (see Theorem~\ref{pdimpliesac} below), let us introduce the following.

\begin{notations}
Assume $X$ is some locally compact (Hausdorff) space, and $G$ is a (discrete) group
acting on $X$ (by homeomorphisms):
\begin{equation}
G\times X\ni (g,x)\longmapsto g\cdot x\in X
\label{G-X}
\end{equation}
Associated with this action, one defines the full crossed product
$C_0(X)\times G$ as follows.
Start off with the convolution $*$-algebra $C_0(X)[G]$ of finite formal sums of the form
$A=\sum_{g\in G}\mathbf{u}_ga_g$, with $a_g\in C_0(X)$, and $\{\mathbf{u}_g\}_{g\in G}$, some formal unitaries, satisfying the identity
\begin{equation}
\mathbf{u}_ga\mathbf{u}_{g^{-1}}=\alpha_g(a),\,\,\,a\in C_0(X),\,g\in G,
\label{aualpha}
\end{equation}
where $\alpha_g:C_0(X)\to C_0(X)$, $g\in G$, are the automorphisms given by
$$(\alpha_g(a))(x)=a(g^{-1}\cdot x), \,\,\,a\in C_0(X),\,x\in X,\,g\in G.$$
One equips this $*$-algebra with the maximal C*-norm $\|\,\cdot\,\|_{\max}$ (which can be shown to exist), and defines the C*-algebra $C_0(X)\times G$ to be the completion of $C_0(X)[G]$ in this norm. Of course, $C_0(X)\times G$ contains $C_0(X)$ as a C*-subalgebra, and
it is not hard to see that there exists a conditional expectation
$E$ of $C_0(X)\times G$ onto $C_0(X)$, hereafter referred to as the {\em standard\/}
expectation,
which acts on the dense subalgebra
$C_0(X)[G]$ as
\begin{equation}
E(\sum_{g\in G}\mathbf{u}_ga_g)=a_e.
\label{expG}
\end{equation}
In general, $E$ may fail to be faithful; one way to correct this is to consider the  {\em reduced\/} norm C*-norm $\|\,\cdot\,\|_{\text{red}}$ on $C_0(X)[G]$,   so that, when we take $C_0(X)\times_{\text{red}}G$ to be the completion, we have a faithful conditional
expectation $E_{\text{red}}$ acting as in \eqref{expG}.

By construction, we have a surjective $*$-homomorphism
$\pi_{\text{red}}:C_0(X)\times G\to C_0(X)\times_{\text{red}} G$, satisfying the identity
$E_{\text{red}}\circ\pi_{\text{red}}=E$.

Since $\pi_{\text{red}}$ acts as the identity on $C_0(X)$, by Remark 1.1, we have the inclusion
\begin{equation}
P_1(C_0(X)\uparrow C_0(X)\times G)\subset
P_1(C_0(X)\uparrow C_0(X)\times_{\text{red}} G).
\label{P1incG}
\end{equation}
%
%
%
%
\end{notations}

\begin{lemma} \label{extendev}
Use the notations as above, and fix some point $x\in X$.
Let $\psi$ be a state on $C_0(X)\times G$ that extends the evaluation state
$\text{\rm ev}_x:C_0(X)\ni a\longmapsto a(x)\in\mathbb{C}$.
If $g\in G$ is such that $g\cdot x\neq x$, then
$$\psi(\mathbf{u}_ga)=0,\,\,\,\forall\,a\in C_0(X).$$
\end{lemma}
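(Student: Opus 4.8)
The plan is to manufacture a single ``peak function'' at $x$ whose support is disjoint from its $g$-translate, and then to sandwich $\mathbf{u}_g a$ between two copies of it. The point is that this sandwich is not merely small but literally the zero element of the crossed product.

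First I would record the peak-function reduction. Since $\psi\big|_{C_0(X)}=\mathrm{ev}_x$, any $f\in C_0(X)$ with $0\le f\le 1$ and $f(x)=1$ satisfies $\psi(f)=\|f\|=1$. Adjoining a unit to $\mathcal{A}=C_0(X)\times G$ and extending $\psi$ canonically, the element $e=\mathbf{1}-f$ is positive with value $0$ under the extended state; Cauchy--Schwarz for the associated GNS form then gives $\psi(ey)=\psi(ye)=0$ for all $y\in\mathcal{A}$, and hence
$$\psi(fyf)=\psi(y),\qquad\forall\,y\in\mathcal{A}.$$
This is exactly the elementary computation recorded in the comment following Theorem 1.1, applied to the peak element $f$.

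Second comes the topological input. Because $g\cdot x\neq x$ and $X$ is Hausdorff, I would separate $x$ and $g\cdot x$ by disjoint open sets and intersect suitably to obtain an open neighborhood $U\ni x$ with $gU\cap U=\varnothing$, equivalently $U\cap g^{-1}U=\varnothing$. Urysohn's lemma for locally compact Hausdorff spaces then furnishes $f\in C_0(X)$ with $0\le f\le 1$, $f(x)=1$, and $\mathrm{supp}\,f\subset U$. Finally I would combine the two ingredients: by the peak identity $\psi(\mathbf{u}_g a)=\psi(f\,\mathbf{u}_g a\,f)$, and rewriting the left copy of $f$ past $\mathbf{u}_g$ via the covariance relation \eqref{aualpha}, namely $f\mathbf{u}_g=\mathbf{u}_g\alpha_{g^{-1}}(f)$, together with the commutativity of $C_0(X)$, gives $f\,\mathbf{u}_g a\,f=\mathbf{u}_g\,a\,\alpha_{g^{-1}}(f)f$. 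But $\big(\alpha_{g^{-1}}(f)f\big)(y)=f(g\cdot y)\,f(y)$ vanishes identically, since $f(y)\neq 0$ forces $y\in U$ while $f(g\cdot y)\neq 0$ forces $y\in g^{-1}U$, and $U\cap g^{-1}U=\varnothing$. Thus $f\,\mathbf{u}_g a\,f=0$ and $\psi(\mathbf{u}_g a)=0$.

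I expect the only genuine step to be arranging the support condition $U\cap g^{-1}U=\varnothing$, and keeping straight the direction of the covariance identity; once the peak function is produced, the remainder is purely formal. The sole point requiring a little care is the possibly non-unital setting, which the passage to the unitization handles cleanly.
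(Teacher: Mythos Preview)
Your argument is correct, and it takes a somewhat different route from the paper. The paper does not arrange the disjoint-support condition $U\cap g^{-1}U=\varnothing$ and does not use the peak identity $\psi(fyf)=\psi(y)$; instead it picks $f$ satisfying only the pointwise conditions $\|f\|=f(x)=1$ and $f(g^{-1}\cdot x)=0$, and then applies Cauchy--Schwarz twice in opposite forms: first $|\psi(T)|^2\le\psi(T^*T)$ with $T=\mathbf{u}_g(a-af)$ gives $\psi(\mathbf{u}_ga)=\psi(\mathbf{u}_gaf)$, since $|a-af|^2(x)=0$; then $|\psi(T)|^2\le\psi(TT^*)$ with $T=\mathbf{u}_gaf$ gives $|\psi(\mathbf{u}_ga)|^2\le\psi(\alpha_g(|af|^2))=|af|^2(g^{-1}\cdot x)=0$. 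Your approach trades these two estimates for one invocation of the peak identity together with a slightly stronger Urysohn-type separation, and gains the conceptual bonus that $f\mathbf{u}_g a f$ vanishes as an element of the crossed product rather than merely under $\psi$; the paper's version, by contrast, needs $f$ to vanish only at the single point $g^{-1}\cdot x$ and never moves $f$ across $\mathbf{u}_g$.
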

\begin{proof}
Fix $a\in C_0(X)$ and $g\in G$ as above, so we also have $g^{-1}\cdot x\neq x$.
Let $f\in C_0(X)$ be a non-negative function, with $\|f\|=1$, $f(x)=1$ and $f(g^{-1}\cdot x)=0$.

On the one hand, using the Cauchy-Schwarz Inequality -- in the form
$|\psi(T)|^2\leq\psi(T^*T)$ -- we have:$
|\psi(\mathbf{u}_g(a-af)|^2
\leq\psi\left(|a-af|^2\right)= |a- af|^2(x)=0$,
so we get
\begin{equation}
\psi(\mathbf{u}_ga)=\psi(\mathbf{u}_gaf).
\label{lema-g1}
\end{equation}

On the other hand, using the Cauchy-Schwarz Inequality -- in the form $|\psi(T)|^2\leq\psi(TT^*)$ -- we also have
$$|\psi(\mathbf{u}_ga)|^2=|\psi(\mathbf{u}_gaf)|^2\leq\psi\left(\mathbf{u}_g|af|^2\mathbf{u}^*_g\right),$$
so using \eqref{aualpha} we obtain
$$|\psi(\mathbf{u}_ga)|^2\leq\psi\left(\alpha_{g}(|af|^2)\right)=
\text{ev}_x\left(\alpha_{g}(|af|^2)\right)=
|af|^2(g^{-1}\cdot x)=0,$$
with the last equality due to the fact that $f(g^{-1} \cdot x)=0$.
\end{proof}

\begin{notation}
For every $g\in G\smallsetminus\{e\}$, let us consider the set moved by $g$:
$$D_g=\{x\in X\,:\,g\cdot x\neq x\}.$$
Of course, all sets
$D_g$, $g\in G\smallsetminus\{e\}$ are open in $X$.
\end{notation}

\begin{corollary} \label{evextends}
With the notations as above, for every $x\in \bigcap_{g\in G\smallsetminus\{e\}}D_g$, the evaluation state $\text{\rm ev}_x\in P(C_0(X))$ belongs to $P_1\big(C_0(X)\uparrow C_0(X)\times G\big)$.
\end{corollary}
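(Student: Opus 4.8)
The plan is to show that for $x \in \bigcap_{g \neq e} D_g$, the evaluation state $\text{ev}_x$ has a \emph{unique} state extension to $C_0(X) \times G$. The key observation is that Lemma~\ref{extendev} almost completely determines any extension. Indeed, suppose $\psi$ is any state on $C_0(X) \times G$ extending $\text{ev}_x$. Since $x \in D_g$ means $g \cdot x \neq x$ for every $g \neq e$, Lemma~\ref{extendev} applies to \emph{every} non-identity group element, giving $\psi(\mathbf{u}_g a) = 0$ for all $a \in C_0(X)$ and all $g \in G \smallsetminus \{e\}$. On the other hand, for the identity element, $\psi(\mathbf{u}_e a) = \psi(a) = \text{ev}_x(a) = a(x)$, since $\psi$ extends $\text{ev}_x$ and $\mathbf{u}_e$ is the unit.

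Next I would leverage that these two facts pin down $\psi$ on the dense subalgebra $C_0(X)[G]$. Any element of $C_0(X)[G]$ is a finite sum $A = \sum_{g \in G} \mathbf{u}_g a_g$, and by linearity $\psi(A) = \sum_g \psi(\mathbf{u}_g a_g) = \psi(\mathbf{u}_e a_e) = a_e(x) = \text{ev}_x(E(A))$, where $E$ is the standard expectation from \eqref{expG}. Thus $\psi$ agrees with $\text{ev}_x \circ E$ on the dense $*$-subalgebra $C_0(X)[G]$. Since $\psi$ is bounded (norm $\leq 1$ as a state) and $C_0(X)[G]$ is dense in $C_0(X) \times G$, continuity forces $\psi = \text{ev}_x \circ E$ on all of $C_0(X) \times G$. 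As this identifies $\psi$ completely without reference to which extension we started with, any two state extensions coincide, so $\text{ev}_x$ has exactly one extension and therefore lies in $P_1\big(C_0(X)\uparrow C_0(X)\times G\big)$.

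The main subtlety to handle carefully is the direction of the Cauchy--Schwarz estimates in Lemma~\ref{extendev}: that lemma is stated for a single $g$ with $g \cdot x \neq x$, and I must confirm that the hypothesis $x \in \bigcap_{g \neq e} D_g$ genuinely supplies this for every non-identity $g$ simultaneously, which it does by the very definition of $D_g$. A second point worth verifying is that $\text{ev}_x \circ E$ is indeed a state (not merely a functional) so that the candidate extension exists at all; this is immediate since $E$ is a conditional expectation, hence positive and unital, and $\text{ev}_x$ is a state. I do not expect a serious obstacle here—the corollary is essentially a direct packaging of Lemma~\ref{extendev} applied across all of $G \smallsetminus \{e\}$, combined with the standard density-and-continuity argument. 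The only care needed is the bookkeeping confirming that determining $\psi$ on the dense subalgebra $C_0(X)[G]$ suffices to determine it everywhere, which follows from the weak*-continuity (equivalently, norm-continuity) of bounded linear functionals.
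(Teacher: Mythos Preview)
Your proposal is correct and follows essentially the same route as the paper: fix an arbitrary state extension $\psi$, use Lemma~\ref{extendev} for every $g\neq e$ to compute $\psi$ on finite sums in $C_0(X)[G]$ as $\text{ev}_x\circ E$, and conclude by density. The paper's argument is just a terser version of yours.
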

\begin{proof}
Fix $x\in \bigcap_{g\in G\smallsetminus\{e\}}X_g$, as well as some state
$\psi$ on $C_0(X)\times G$ that extends $\text{ev}_x$, and let us prove the
equality
\begin{equation}
\psi=\text{ev}_x\circ E.
\label{lemaG}
\end{equation}
This follows immediately from Lemma~\ref{extendev}, which implies that for any finite sum $A=\sum_g\mathbf{u}_ga_g\in C_0(X)[G]$, we have $\psi(A)=\psi(a_e)=\text{ev}_x(a_e)=\text{ev}_x(E(A))$, so the states $\psi$ and $\text{ev}_x\circ E$ agree on a dense C*-subalgebra.
\end{proof}

\begin{definition}
An action \eqref{G-X} is said to be {\em essentially free},
if for every $g\in G\smallsetminus\{e\}$, the set
$X_g=\{x\in X\,:\,g\cdot x=x\}$ has empty interior.
Equivalently, for every $g\in G\smallsetminus\{e\}$, the set $D_g$ is dense in $X$.
\end{definition}

\begin{theorem} \label{crossprod}
If the action of a countable group $G$   on $X$ is essentially free, then
\begin{itemize}
\item[(i)] the inclusion $C_0(X)\subset C_0(X)\times G$ has the Canonical Almost Extension
Property, with $E$ as the associated expectation;
\item[(ii)] $C_0(X)$ is a pseudo-diagonal in $C_0(X)\times_{\text{\rm red}}G$, with associated expectation $E_{\text{\rm red}}$.
\end{itemize}
\end{theorem}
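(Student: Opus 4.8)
The plan is to assemble the theorem from the pieces already in hand. For part (i), I need two things: the Almost Extension Property for $C_0(X) \subset C_0(X) \times G$, and the identification of $E$ (the standard expectation) as the associated conditional expectation. For the density, the essential freeness hypothesis says each $D_g$ is dense in $X$, hence open and dense; since $G$ is countable, the Baire Category Theorem applies to the locally compact Hausdorff space $X$, so the intersection $\bigcap_{g \in G \smallsetminus \{e\}} D_g$ is dense in $X$. By Corollary~\ref{evextends}, every point $x$ in this intersection gives an evaluation state $\text{ev}_x$ lying in $P_1(C_0(X) \uparrow C_0(X) \times G)$. Since the map $x \mapsto \text{ev}_x$ is a homeomorphism onto its (dense) image inside $P(C_0(X)) = X$ (pure states of a commutative algebra being exactly the evaluations), density of the intersection in $X$ translates directly into weak*-density of the corresponding pure states in $P(C_0(X))$. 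This gives the Almost Extension Property, and since the standard expectation $E$ already exists (as recorded in the Notations), the inclusion has the Canonical Almost Extension Property; by the uniqueness remark (Remark~2.5), $E$ must be the associated expectation.

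For part (ii), I would use the reduced crossed product, where the key extra feature is that $E_{\text{red}}$ is \emph{faithful} by construction. First I establish the Almost Extension Property for $C_0(X) \subset C_0(X) \times_{\text{red}} G$. This comes directly from the inclusion \eqref{P1incG}, which says $P_1(C_0(X) \uparrow C_0(X) \times G) \subset P_1(C_0(X) \uparrow C_0(X) \times_{\text{red}} G)$: the left side is already weak*-dense in $P(C_0(X))$ by part (i), so the right side is too. Again $E_{\text{red}}$ exists and, by Remark~2.5, is the associated expectation. Finally, faithfulness of $E_{\text{red}}$ — which is exactly condition (ii) in the definition of pseudo-diagonal — holds by the defining property of the reduced norm, so $C_0(X)$ is a pseudo-diagonal in $C_0(X) \times_{\text{red}} G$.

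The only genuinely substantive point is the Baire category argument converting the pointwise statement of Corollary~\ref{evextends} into weak*-density, and this is where countability of $G$ is used in an essential way (without it the countable intersection of dense open sets need not be dense). The remaining steps are bookkeeping: matching up the abstract definitions (Canonical Almost Extension Property, pseudo-diagonal) with the concrete maps $E$ and $E_{\text{red}}$, and invoking the already-established uniqueness of the associated expectation. I would also note in passing that the identification $P(C_0(X)) = X$ (that pure states of the commutative algebra are precisely evaluations at points) is what lets density in $X$ and weak*-density in $P(C_0(X))$ be used interchangeably.
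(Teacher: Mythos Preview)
Your proposal is correct and follows essentially the same route as the paper: reduce to showing that $\bigcap_{g\neq e}D_g$ is dense via Baire's Theorem (using countability of $G$), invoke Corollary~\ref{evextends} to place the corresponding evaluation states in $P_1(C_0(X)\!\uparrow\!C_0(X)\times G)$, and then pass to the reduced crossed product via the inclusion \eqref{P1incG} together with the faithfulness of $E_{\text{red}}$. The paper's own proof is terser (it leaves implicit the identification $P(C_0(X))\cong X$ and the verification that $E$ is the associated expectation), but the argument is the same.
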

\begin{proof}
Using \eqref{P1incG}, it suffices to prove only condition (i). By Corollary~\ref{evextends}, it
suffices to prove that the set $\bigcap_{g\in G\smallsetminus\{e\}}D_g$ is dense in $X$, or equivalently, the union of the complements $\bigcup_{g\in G\smallsetminus\{e\}}X_g$ has empty interior. Since each $X_g$ is closed with empty interior, and the union is countable, the desired conclusion follows from Baire's Theorem for locally compact spaces.
\end{proof}

%
%

\begin{remark}
If a countable group $G$ acts essentially freely on $X$, then Remark~\ref{func} (the last statement) yields the following fact:
\begin{itemize}
\item[$(*)$] {\em Whenever $(\pi,u)$ is a covariant representation of $(C_0(X),G)$ with $\pi$ injective, it follows that the inclusion
$\pi(C_0(X))\subset C^*(\pi(C_0(X))\cup u(G))$ has the Almost Extension Property}.
\end{itemize}
\end{remark}

We conclude this section with an off-topic discussion of a variant of the results from 2.8 through 2.10, which answers a question of Paulsen \cite{Pa}. The starting point is the following property, which is much stronger than essential freeness.
\begin{definition}
One says that an action \eqref{G-X} is {\em free}, if $D_g=X$, $\forall\,g\in G\smallsetminus\{e\}$.
\end{definition}
\begin{remark}
If $G$ acts freely on $X$, even when $G$ is uncountable, by the same proof as in Corollary~\ref{evextends} it follows that the inclusion
$C_0(X)\subset C_0(X)\times G$ has the honest Extension Property. Furthermore, as argued Remark~\ref{func}, this gives the following fact:
\begin{itemize}
\item[$(**)$] {\em Whenever $(\pi,u)$ is a covariant represen\-tation of $(C_0(X),G)$ (no extra assumption), it follows that the inclusion
$\pi(C_0(X))\subset C^*(\pi(C_0(X))\cup u(G))$ has the honest Extension Property}.
\end{itemize}
Of course, when we consider the reduced crossed product, i.e.~the quotient $*$-homomorphism $\pi_{\text{red}}$ it follows that
$C_0(X)$ is a pseudo-diagonal in $C_0(X)\times_{\text{red}}G$, with the honest Extension Property.
\end{remark}

\begin{example}
Fix a discrete group $G$ and identify $\ell^\infty(G)=C(\beta G)$.
We claim that, when we let $G$ act on $\ell^\infty(G)$ by translation, the corresponding action of $G$ on $\beta G$ is free. To see this, we fix some $g\in G\smallsetminus\{e\}$, as well as some point $x\in\beta G$ represented by an ultrafilter $\mathcal{U}$ in $G$, and we show that the ultrafilter that correponds to $g\cdot x$, which is obviously $g\mathcal{U}=\{gU\,:\,U\in\mathcal{U}\}$ is distinct from $\mathcal{U}$.
Let $\Gamma$ be the subgroup generated by $g$, and let
$T\subset G$ be a complete set of representatives for the quotient space
$\Gamma/ G$, so that every element $h\in G$ can be written uniquely as
$h=st$, with $s\in \Gamma$ and $t\in T$. Consider now the following two cases:
(i) $\Gamma$ is finite, say $\Gamma=\{e,g,g^2,\dots,g^k\}$; (ii) $\Gamma$ is cyclic,
so for every element $h\in G$, there exists a unique integer $n(h)\in\mathbb{Z}$ and a unique $t\in T$, such that $h=g^{n(h)}t$.

Case (i) is trivial, because we have a finite partition $G=T\cup gT\cup\dots g^k T$, so there exists some $j\in\{0,\dots,k\}$ such that the set $U=g^jT\in\mathcal{U}$, in which case the set $gU=g^{j+1}T$ (which belongs to $g\mathcal{U}$), cannot belong
to $\mathcal{U}$, simply because $U\cap gU=\varnothing$.

In case (ii) we use the map $n:G\to \mathbb{Z}$ to partitition
$G=G_{e}\cup G_{o}$, where
\begin{align*}
G_{e}&=\{h\in G\,:\,n(h)\text{ even}\},\\
G_{o}&=\{h\in G\,:\,n(h)\text{ odd}\},
\end{align*}
which satisfy the equalities $gG_e=G_o$ and $gG_o=G_e$, so either $G_e$ belongs to $\mathcal{U}$, in which case $G_o=gG_o$ belongs to $g\mathcal{U}$, but not to $\mathcal{U}$, or vice-versa.

To summarize, we have shown that then
inclusion $\ell^\infty(G)\subset\ell^\infty(G)\times G$ has the honest Extension Property, and furthermore, for any covariant representation $(\pi,u)$ of
$(\ell^\infty(G),G)$, the inclusion $\pi(\ell^\infty(G))\subset C^*(\pi(\ell^\infty(G))\cup u(G))$ also has the honest Extension Property.
\end{example}

\section{Abelian Cores}

In this section we derive the main results from this article, which deal with the automatic maximality of a pseudo-diagonal as an abelian subalgebra as well as a uniqueness property for the ambient C*-algebra.
As pointed out, for instance in \cite{KaSin}, if $\mathcal{B}\subset\mathcal{A}$ has
the (honest) Extension Property (with $\mathcal{B}$ abelian, of course), then $\mathcal{B}$ is a maximal abelian
subalgebra.
As we saw in Example~\ref{Enotfaithfulexample}, this may not necessarily be the case if only
the Canonical Almost Extension Property is assumed. As we shall prove shortly,
pseudo-diagonals are in fact maximal abelian. In addition to this feature, pseudo-diagonals also have another key property, geared toward uniqueness for the ambient C*-algebra $\mathcal{A}$, so it is convenient to introduce the following terminology.

\begin{definition}
Assume $\mathcal{A}$ is a C*-algebra. An abelian C*-subalgebra $\mathcal{B}\subset\mathcal{A}$ is called an {\em abelian core for $\mathcal{A}$}, if the following conditions are satisfied
\begin{itemize}
\item[{\sc (e${}_1$)}] there exists a unique conditional expectation $E$ of $\mathcal{A}$ onto $\mathcal{B}$;
\item[{\sc (e${}_2$)}] the conditional expectation $E$ of {\sc (e${}_1$)} is faithful;
\item[{\sc (m)}] $\mathcal{B}$ is a maximal abelian subalgebra in $\mathcal{A}$;
\item[{\sc (u)}] a $*$-representation $\pi:\mathcal{A}\to B(H)$ is injective whenever its restriction to $\mathcal{B}$ is injective.
\end{itemize}
\end{definition}

\begin{comment}
Although we are mainly interested in the implication
``pseudo-diagonal'' $\Rightarrow$ ``abelian core,''
it will be beneficial to
prove a slight generalization (Theorem \ref{pdimpliesac} below), in order to accomodate the following situation. Assume a C*-algebra $\mathcal{A}$ can be written as
\begin{equation}
\mathcal{A}=\overline{\bigcup_{\sigma\in\Sigma}\mathcal{A}_\sigma},
\label{A=union}
\end{equation}
where
$(\mathcal{A}_\sigma)_{\sigma\in\Sigma}$ is an increasing net of C*-subalgebras
(i.e., $\nu\succ\sigma\Rightarrow\mathcal{A}_\sigma\subset\mathcal{A}_\nu$). Assume also that we have an abelian C*-subalgebra $\mathcal{B}\subset\mathcal{A}$, which is contained in all $\mathcal{A}_\sigma$'s. It is pretty clear that the net
$(P_1(\mathcal{B}\uparrow\mathcal{A}_\sigma))_{\sigma\in\Sigma}$ is decreasing, and furthermore, we have the equality
$$\mathcal{P}_1(\mathcal{B}\uparrow\mathcal{A})=\bigcap_{\sigma\in\Sigma}P_1(\mathcal{B}\uparrow\mathcal{A}_\sigma),$$
so, even in the case when all inclusions $\mathcal{B}\subset\mathcal{A}_\sigma$,
$\sigma\in\Sigma$, have the Almost Extension Property, the inclusion $\mathcal{B}\subset\mathcal{A}$ might fail to have it.
\end{comment}

\begin{theorem} \label{pdimpliesac}
Assume $\mathcal{A}$ is a C*-algebra, written as in \eqref{A=union}, for an increasing net $(\mathcal{A}_\sigma)_{\sigma\in \Sigma}$ of C*-algebras. Assume also that
$\mathcal{B}\neq\{0\}$ is an abelian C*-subalgebra in $\bigcap_{\sigma\in\Sigma}\mathcal{A}_\sigma$, such that there exists
a faithful conditional expectation $E$ of $\mathcal{A}$ onto $\mathcal{B}$.
If all inclusions $\mathcal{B}\subset\mathcal{A}_\sigma$, $\sigma\in\Sigma$, have the Almost Extension Property, then $\mathcal{B}$ is an abelian core in $\mathcal{A}$.
\end{theorem}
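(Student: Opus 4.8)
The plan is to verify the four defining properties \textsc{(e${}_1$)}, \textsc{(e${}_2$)}, \textsc{(m)}, \textsc{(u)} of an abelian core one at a time. Property \textsc{(e${}_2$)} is exactly the hypothesis that $E$ is faithful, so nothing is required. For \textsc{(e${}_1$)}, I would note that for each $\sigma$ the restriction $E|_{\mathcal{A}_\sigma}$ is a conditional expectation of $\mathcal{A}_\sigma$ onto $\mathcal{B}$; since $\mathcal{B}\subset\mathcal{A}_\sigma$ has the Almost Extension Property, the earlier remark that such an inclusion admits at most one conditional expectation forces any $F:\mathcal{A}\to\mathcal{B}$ to satisfy $F|_{\mathcal{A}_\sigma}=E|_{\mathcal{A}_\sigma}$. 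Letting $\sigma$ range and using density of $\bigcup_\sigma\mathcal{A}_\sigma$ gives $F=E$. This same observation shows each inclusion $\mathcal{B}\subset\mathcal{A}_\sigma$ has the Canonical Almost Extension Property with associated expectation $E|_{\mathcal{A}_\sigma}$, so Lemma~\ref{seminorm} applies verbatim on every $\mathcal{A}_\sigma$.

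The first substantive step is to promote Lemma~\ref{seminorm} from the $\mathcal{A}_\sigma$'s to all of $\mathcal{A}$. Because the seminorm $p$ is a supremum over the fixed set $\{b\in\mathcal{B}:\|b\|\leq 1\}$, the inequality $\|E(a)^2-E(a^2)\|\leq 2\|a\|\,p(a)$ already holds for every $a\in\bigcup_\sigma\mathcal{A}_\sigma$; as both sides are norm-continuous in $a$ (note $p$ is $2$-Lipschitz), it extends to all of $\mathcal{A}$. Property \textsc{(m)} then follows: given $a\in\mathcal{A}$ commuting with $\mathcal{B}$, I would first reduce to $a=a^*$ (since $a^*$ and hence the real and imaginary parts of $a$ also commute with $\mathcal{B}$), set $c=a-E(a)$, and observe $c=c^*$, $E(c)=0$, and $p(c)=0$ (as $E(a)\in\mathcal{B}$ is central in the abelian $\mathcal{B}$). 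The extended Lemma~\ref{seminorm} yields $E(c^*c)=E(c^2)=E(c)^2=0$, and faithfulness of $E$ gives $c=0$, i.e.\ $a=E(a)\in\mathcal{B}$.

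The heart of the argument is \textsc{(u)}. Assume $\pi|_{\mathcal{B}}$ is injective, hence isometric, and let $a\in\ker\pi$; I aim to show $E(a^*a)=0$, which forces $a=0$ by faithfulness. Put $y=a^*a\geq 0$, so $\pi(y)=0$. Fix $\varepsilon>0$, approximate $y$ to within $\varepsilon$ by a self-adjoint $y_\sigma\in\mathcal{A}_\sigma$, and—using density of $P_1(\mathcal{B}\!\uparrow\!\mathcal{A}_\sigma)$ in $P(\mathcal{B})$ together with the fact that $E(y)\in\mathcal{B}$ attains its norm—choose $\phi\in P_1(\mathcal{B}\!\uparrow\!\mathcal{A}_\sigma)$ with $\phi(E(y))>\|E(y)\|-\varepsilon$. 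Since the unique extension of $\phi$ to $\mathcal{A}_\sigma$ is $\phi\circ E$, Proposition~\ref{net} supplies positive $b_\lambda\in\mathcal{B}$ with $\|b_\lambda\|=\phi(b_\lambda)=1$ and $\|b_\lambda y_\sigma b_\lambda-\phi(E(y_\sigma))b_\lambda^2\|\to 0$. Because $\pi(b_\lambda y b_\lambda)=0$ and $\|\pi(b_\lambda^2)\|=\|b_\lambda^2\|=1$, applying the contraction $\pi$ gives $|\phi(E(y_\sigma))|\leq \|b_\lambda(y_\sigma-y)b_\lambda\|+o(1)\leq \varepsilon+o(1)$, whence in the limit $|\phi(E(y_\sigma))|\leq\varepsilon$. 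Since $|\phi(E(y))-\phi(E(y_\sigma))|\leq\|y-y_\sigma\|<\varepsilon$, this yields $\phi(E(y))\leq 2\varepsilon$ and therefore $\|E(y)\|<3\varepsilon$. Letting $\varepsilon\to 0$ finishes \textsc{(u)}.

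I expect the main obstacle to be precisely the inductive-limit bookkeeping in \textsc{(u)}: since $\mathcal{A}$ itself need not have the Almost Extension Property, $P_1(\mathcal{B}\!\uparrow\!\mathcal{A})$ may fail to be dense, so one cannot run the pure-state machinery directly in $\mathcal{A}$. The key is to commit to a single $\sigma$ only after approximating $y$ by $y_\sigma$, to carry out the Proposition~\ref{net} estimate entirely inside $\mathcal{A}_\sigma$, and to transfer back to $\mathcal{A}$ solely through contractivity of $\pi$ and the bound $\|y-y_\sigma\|<\varepsilon$. Keeping the two approximation scales separate—the fixed $\varepsilon$ controlling $y\approx y_\sigma$ and $\phi(E(y))\approx\|E(y)\|$, versus the net limit in $\lambda$—is the delicate point; everything else is routine once Lemma~\ref{seminorm} has been lifted to $\mathcal{A}$.
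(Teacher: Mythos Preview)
Your proof is correct and follows the same overall strategy as the paper: verify \textsc{(e${}_1$)}--\textsc{(e${}_2$)} via the uniqueness remark, obtain \textsc{(m)} from Lemma~\ref{seminorm} plus faithfulness, and prove \textsc{(u)} using the net from Proposition~\ref{net}. The only substantive difference is in the bookkeeping for \textsc{(u)}. You work directly in $\mathcal{A}$: given $a\in\ker\pi$, you approximate $y=a^*a$ by some $y_\sigma\in\mathcal{A}_\sigma$, run Proposition~\ref{net} on $y_\sigma$, and then transfer back via $\|y-y_\sigma\|<\varepsilon$ and contractivity of $\pi$, which costs you the two-scale $\varepsilon$/$\lambda$ management you flag as the delicate point. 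The paper sidesteps this entirely by first observing that it suffices to show $\ker\pi\cap\mathcal{A}_\sigma=\{0\}$ for every $\sigma$ (since a $*$-homomorphism that is isometric on each member of a dense increasing net is isometric on the closure); one then takes a positive $x\in\ker\pi\cap\mathcal{A}_\sigma$, applies Proposition~\ref{net} to $x$ itself inside $\mathcal{A}_\sigma$, and reads off $\phi(E(x))=0$ exactly, with no approximation at all. Your route works, but the reduction to $\mathcal{A}_\sigma$ is cleaner and removes precisely the obstacle you anticipated.
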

\begin{proof}
Denote, for each $\sigma\in\Sigma$, the restriction of $E$ to $\mathcal{A}_\sigma$ by
$E_\sigma$. It is   clear that $\mathcal{B}$ is a pseudo-diagonal in
$\mathcal{A}_\sigma$ (with associated expectation $E_\sigma$), for each $\sigma\in\Sigma$.

The proof will be carried on in steps that correspond to the conditions listed in the preceding definition.

{\sc (e${}_1$)-(e${}_2$)} Assume $F$ is another conditional expectation of $\mathcal{A}$ onto $\mathcal{B}$. It is clear that, for each $\sigma\in\Sigma$, the restriction $F_\sigma=
F\big|_{\mathcal{A}_\sigma}:\mathcal{A}_\sigma\to\mathcal{B}$ is a conditional expectation of
$\mathcal{A}_\sigma$ onto $\mathcal{B}$, so using Remark 2.1 (for the inclusion $\mathcal{B}\subset\mathcal{A}_\sigma$), it follows that $F_\sigma=E_\sigma$. This argument shows that $F$ coincides with $E$ on the dense subalgebra $\bigcup_{\sigma\in\Sigma}\mathcal{A}_\sigma$, so by continuity
we have $F=E$, so both conditions {\sc (e${}_1$) (e${}_2$)} are satisfied.

{\sc (m)} All we have to prove here is that the commutant $$\mathcal{B}'=\{a\in\mathcal{A}\,:\,ab=ba,\,\,\forall\,b\in\mathcal{B}\}$$ is contained in $\mathcal{B}$. Since $\mathcal{B}'$ is a C*-algebra, it suffices to show that all self-adjoint elements $x\in\mathcal{B}'$ belong to $\mathcal{B}$. Fix such an element $x$, and
choose for every integer $n\geq 1$ an element
$a_n\in\bigcup_{\sigma\in\Sigma}\mathcal{A}_\sigma$, with
 $\|x-a_n\|\leq \frac{1}n$.
Using the seminorm $p$ from Lemma~\ref{seminorm}, we have $\big|p(x)-p(a_n)\big|\leq 2\|x-a_n\|\leq \frac{2}n$, which implies $p(a_n)\leq \frac{2}n$, so if we choose $\sigma_n\in\Sigma$ so that $a_n\in \mathcal{A}_{\sigma_n}$,
then by applying Lemma~\ref{seminorm} to the inclusion $\mathcal{B}\subset\mathcal{A}_{\sigma_n}$,
we get the inequalities
$\|E(a_n^2)-E(a_n)^2\|\leq 2\|a_n\|\cdot p(a_n)\leq \frac{4}n\left(\|x\|+\frac{1}n\right))$, which imply that
\begin{equation}
\lim_{n\to\infty}\|E(a_n^2)-E(a_n)^2\|=0.
\label{Ean-0}
\end{equation}
Since by continuity we have $\lim_{n\to \infty}E(a_n)=E(x)$ and
$\lim_{n\to \infty}E(a_n^2)=E(x^2)$, using \eqref{Ean-0} it follows that
$\|E(x^2)-E(x)^2\|=0$, so we get the identity:
\begin{equation}
E(x^2)=E(x)^2.
\label{Ex2=}
\end{equation}
By the properties of conditional expectations this yields:
$$E([x-E(x)]^2)=E(x^2-xE(x)-E(x)x+E(x)^2)=E(x^2)-E(x)^2=0.$$
In other words, if we consider the (self-adjoint) element $y=x-E(x)$, we have the equality
$E(y^*y)=0$, which by the faithfulness of $E$ implies $y=0$, which means that
$x=E(x)\in\mathcal{B}$.

{\sc (u)} Fix a $*$-representation $\pi:\mathcal{A}\to B(H)$, whose restriction
$\pi\big|_{\mathcal{B}}:\mathcal{B}\to B(H)$ is injective, and let us show that
$\pi$ itself is injective (thus isometric). Denote $\text{Ker}\,\pi$ simply by $\mathcal{J}$, so what we must show is the equality $\mathcal{J}=\{0\}$, given that
$\mathcal{J}\cap\mathcal{B}=\{0\}$. By construction, it suffices to prove that for each $\sigma\in\Sigma$ the restriction $\pi_\sigma=\pi\big|_{\mathcal{A}_\sigma}:\mathcal{A}_\sigma\to B(H)$ is injective (thus isometric), which is equivalent to the equality $\mathcal{J}\cap\mathcal{A}_\sigma=\{0\}$. Fix $\sigma$ and some $x\in\mathcal{J}\cap\mathcal{A}_\sigma$, and let us prove that
$x=0$. We can, of course, assume that $x$ is positive, so by the faithfulness of
$E$, it suffices to prove that $E_\sigma(x)=0$ (in $\mathcal{B}$).
By the density of $P_1(\mathcal{B}\!\uparrow\!\mathcal{A}_\sigma)$ in $P(\mathcal{B})$, it suffices
to prove that $\phi(E_\sigma(x))=0$,
$\forall\,\phi\in P_1(\mathcal{B}\!\uparrow\!\mathcal{A}_\sigma)$. Fix such
a $\phi$, and use Proposition~\ref{net}, to find a net
$(b^{}_\lambda)_{\lambda\in\Lambda}$ of positive elements in
$\mathcal{B}$, such that
$\|b^{}_\lambda\|=\phi(b^{}_\lambda)=1$, $\forall\,\lambda$, and such
that
\begin{equation}
\lim_{\lambda\in\Lambda}\|b^{}_\lambda
xb^{}_\lambda-\phi(E_\sigma(x))b^2_\lambda\|=0.
\label{uniq}\end{equation}
Remark now that, since $b^{}_\lambda xb^{}_\lambda$ belongs to
$\mathcal{J}(=\text{Ker}\,\pi)$, for every $\lambda$, by applying $\pi$ in
\eqref{uniq} we get
\begin{equation}
0=\lim_{\lambda\in\Lambda}\|\pi(\phi(E_\sigma(x))b^2_\lambda)\|=
\lim_{\lambda\in\Lambda}|\phi(E_\sigma(x))|\cdot\|\pi(b^{}_\lambda)^2\|.
\label{uniq2}
\end{equation}
But now, since we know that $\pi$ is injective (hence isometric) on $\mathcal{B}$,
we have
$$\|\pi(b^{}_\lambda)^2\|=\|\pi(b^{}_\lambda)\|^2=
\|b^{}_\lambda\|^2=1,\,\,\,\forall\,\lambda\in\Lambda,$$
and then \eqref{uniq2} forces $\phi(E(x))=0$, and we are done.
\end{proof}
\begin{corollary}
If $\mathcal{B}$ is a pseudo-diagonal in $\mathcal{A}$, then
$\mathcal{B}$ is an abelian core in $\mathcal{A}$.\hfill\qedsymbol
\end{corollary}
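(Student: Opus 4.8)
The plan is to deduce this immediately from Theorem~\ref{pdimpliesac} by specializing the increasing net to a trivial one. Since $\mathcal{B}$ is a pseudo-diagonal, by definition the inclusion $\mathcal{B}\subset\mathcal{A}$ has the Canonical Almost Extension Property---in particular the Almost Extension Property---and the associated expectation $E\colon\mathcal{A}\to\mathcal{B}$ is faithful. I would therefore invoke Theorem~\ref{pdimpliesac} with the constant net indexed by a one-point directed set $\Sigma=\{\ast\}$, setting $\mathcal{A}_\ast=\mathcal{A}$.

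With this choice every hypothesis of the theorem is met essentially by inspection. The decomposition \eqref{A=union} holds trivially, since $\overline{\bigcup_{\sigma\in\Sigma}\mathcal{A}_\sigma}=\mathcal{A}$; the indexing set is (vacuously) directed and the net is (vacuously) increasing; the intersection $\bigcap_{\sigma\in\Sigma}\mathcal{A}_\sigma$ equals $\mathcal{A}$, which certainly contains the abelian subalgebra $\mathcal{B}$; the faithful conditional expectation $E$ is supplied by the pseudo-diagonal structure; and the single inclusion $\mathcal{B}\subset\mathcal{A}_\ast=\mathcal{A}$ has the Almost Extension Property. The conclusion of Theorem~\ref{pdimpliesac} is then precisely that $\mathcal{B}$ is an abelian core in $\mathcal{A}$, so all four conditions {\sc (e${}_1$)}, {\sc (e${}_2$)}, {\sc (m)}, {\sc (u)} hold.

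The only point requiring a moment's care is the standing requirement $\mathcal{B}\neq\{0\}$ in Theorem~\ref{pdimpliesac}. If $\mathcal{B}=\{0\}$, then faithfulness of the expectation $E\colon\mathcal{A}\to\{0\}$ forces $E(a^\ast a)=0$, hence $a=0$, for every $a\in\mathcal{A}$; thus $\mathcal{A}=\{0\}$ and the assertion holds degenerately. Otherwise $\mathcal{B}\neq\{0\}$ and the theorem applies directly. I do not anticipate any genuine obstacle here: the full content of the corollary has already been absorbed into Theorem~\ref{pdimpliesac}, whose generalized formulation (admitting an arbitrary net $(\mathcal{A}_\sigma)_{\sigma\in\Sigma}$) was introduced exactly so that the pseudo-diagonal case collapses to the trivial-net instance. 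The substantive work---the verification of {\sc (e${}_1$)}--{\sc (u)}---is carried out in the proof of that theorem and need not be repeated.
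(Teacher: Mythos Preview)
Your proposal is correct and is exactly the intended argument: the paper itself gives no proof beyond the \qedsymbol, since the corollary is the special case of Theorem~\ref{pdimpliesac} with the trivial one-term net $\mathcal{A}_\sigma=\mathcal{A}$. Your handling of the degenerate case $\mathcal{B}=\{0\}$ is a harmless extra precaution.
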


\begin{example}
Using the set-up from Example~\ref{coreex}, it follows that $M(E)$ is an abelian core in
$C^*(E)$. As pointed out in \cite{NaRe}, the fact that $M(E)$ has property {\sc (u)} gives an alternative proof of Szymanski's Uniqueness Theorem from \cite{Sz}, which in turn is
a generalization of the Uniqueness Theorem for Cuntz algebras.
\end{example}

The last application (compare with \cite[Corollary of Theorem~1]{AS}) is the following combined consequence of Theorems~\ref{crossprod} and~\ref{pdimpliesac}.

\begin{corollary}
If a (not necessarily countable) discrete group $G$ acts essentially freely on $X$, then
$C_0(X)$ is an abelian core in $C_0(X)\times_{\text{\rm red}}G$.

In particular, if the action of $G$ on $X$ is also minimal, i.e., the only $G$-invariant open subsets of $X$ are $\varnothing$ and $X$, then
$C_0(X)\times_{\text{\rm red}}G$ is simple.
\end{corollary}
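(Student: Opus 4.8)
The plan is to reduce the (possibly uncountable) group to a directed family of countable subgroups and then invoke Theorem~\ref{pdimpliesac}. Write $\mathcal{A}=C_0(X)\times_{\text{red}}G$ and $\mathcal{B}=C_0(X)$ (nonzero once $X\neq\varnothing$). For each countable subgroup $H\leq G$ let $\mathcal{A}_H\subseteq\mathcal{A}$ be the C*-subalgebra generated by $C_0(X)\cup\{\mathbf{u}_h:h\in H\}$. It is standard that $C_0(X)[H]\hookrightarrow C_0(X)[G]$ extends to an isometric identification $\mathcal{A}_H\cong C_0(X)\times_{\text{red}}H$ under which $E_{\text{red}}$ restricts to the standard (faithful) expectation of $C_0(X)\times_{\text{red}}H$ onto $C_0(X)$. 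Since the restriction to $H$ of an essentially free action is again essentially free (for $h\in H\smallsetminus\{e\}\subseteq G\smallsetminus\{e\}$ the set $X_h$ still has empty interior) and $H$ is countable, Theorem~\ref{crossprod} shows that $C_0(X)$ is a pseudo-diagonal in $\mathcal{A}_H$; in particular $\mathcal{B}\subset\mathcal{A}_H$ has the Almost Extension Property. The countable subgroups form a directed set under inclusion, so $(\mathcal{A}_H)_H$ is an increasing net of C*-subalgebras all containing $\mathcal{B}$, and every finite sum $\sum_g\mathbf{u}_ga_g\in C_0(X)[G]$ lies in $\mathcal{A}_H$ for $H$ the countable subgroup generated by the $g$'s occurring; hence $\mathcal{A}=\overline{\bigcup_H\mathcal{A}_H}$. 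With $E_{\text{red}}$ a faithful conditional expectation of $\mathcal{A}$ onto $\mathcal{B}$, all hypotheses of Theorem~\ref{pdimpliesac} are met, and we conclude that $\mathcal{B}=C_0(X)$ is an abelian core in $\mathcal{A}$.

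For the simplicity statement, assume in addition that the action is minimal, and let $\mathcal{J}\subseteq\mathcal{A}$ be a nonzero closed two-sided ideal; the goal is $\mathcal{J}=\mathcal{A}$. Applying property {\sc (u)} of the abelian core to a $*$-representation $\pi:\mathcal{A}\to B(H)$ with $\ker\pi=\mathcal{J}$ (for instance the quotient map followed by a faithful representation of $\mathcal{A}/\mathcal{J}$), which is not injective, shows that $\pi$ cannot be injective on $\mathcal{B}$; that is, $\mathcal{J}\cap C_0(X)\neq\{0\}$. This intersection is an ideal of $C_0(X)$, hence of the form $C_0(U)$ for a nonempty open $U\subseteq X$, and for $a\in\mathcal{J}\cap C_0(X)$ and $g\in G$ the element $\alpha_g(a)=\mathbf{u}_ga\mathbf{u}_{g^{-1}}$ again lies in $\mathcal{J}\cap C_0(X)$ (approximating $\mathbf{u}_g$ by $\mathbf{u}_ge_\lambda$ along an approximate unit $(e_\lambda)$ of $C_0(X)$ when $C_0(X)$ is non-unital), so $U$ is $G$-invariant. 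Minimality forces $U=X$, i.e.\ $\mathcal{J}\supseteq C_0(X)=\mathcal{B}$. Finally, for $g\in G$ and $a_g\in C_0(X)$ the products $(\mathbf{u}_ge_\lambda)a_g\in\mathcal{A}\cdot\mathcal{J}\subseteq\mathcal{J}$ converge to $\mathbf{u}_ga_g$, so the dense $*$-subalgebra $C_0(X)[G]$ lies in $\mathcal{J}$, whence $\mathcal{J}=\mathcal{A}$ and $\mathcal{A}$ is simple.

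I expect the only genuinely nonroutine point to be the first step, namely realizing the uncountable reduced crossed product as the closed union of the net of countable reduced crossed products $\mathcal{A}_H$, together with the standard fact that $C_0(X)\times_{\text{red}}H$ embeds isometrically in $C_0(X)\times_{\text{red}}G$ with compatible expectations. Once this net is in place the abelian-core conclusion is immediate from Theorem~\ref{pdimpliesac}, and the simplicity argument is a routine ideal-intersection computation driven by property {\sc (u)} and minimality.
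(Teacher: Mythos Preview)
Your proof is correct and follows essentially the same approach as the paper: both reduce to the net of reduced crossed products over countable subgroups, invoke Theorem~\ref{crossprod} on each to obtain the Almost Extension Property, and then apply Theorem~\ref{pdimpliesac} with the faithful expectation $E_{\text{red}}$; the simplicity argument likewise proceeds via the $G$-invariance of $\mathcal{J}\cap C_0(X)$ combined with property {\sc (u)}. Your write-up is in fact a bit more explicit than the paper's (e.g., justifying the $G$-invariance of the ideal via approximate units in the non-unital case), but there is no substantive difference in strategy.
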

\begin{proof}
As pointed out right before Lemma~\ref{evextends}, we do have a faithful conditional expectation $E_{\text{red}}$ of
$C_0(X)\times_{\text{red}}G$ onto $C_0(X)$.
Let $\Sigma$ be the collection of all countable subgroups of $G$, equipped with the order $H\succ K\Leftrightarrow H\supset K$, $H,K\in\Sigma$.
Since we work with discrete groups, for any subgroup $H\subset G$, we have an inclusion
$C_0(X)\times_{\text{red}}H\subset C_0(X)\times_{\text{red}}G$, so we can consider the net
$(C_0(X)\times_{\text{red}}H)_{H\in\Sigma}$ of C*-subalgebras in $C_0(X)\times_{\text{red}}G$, which clearly satisfies the equality (even without closure!)
$$C_0(X)\times_{\text{red}}G=\bigcup_{H\in\Sigma}C_0(X)\times_{\text{red}}H.$$
The first statement now follows from Theorem~\ref{pdimpliesac}, since by Theorem~\ref{crossprod}, each
inclusion $C_0(X)\subset C_0(X)\times_{\text{red}}H$ has the Almost Extension Property.

To prove the second statement, assume the action is minimal, and let us show that, if $\mathcal{J}$ is a closed two-sided ideal in $C_0(X)\times_{\text{red}}G$, then
either
\begin{itemize}
\item[$(a)$] $\mathcal{J}=C_0(X)\times_{\text{red}}G$, or
\item[$(b)$] $\mathcal{J}=\{0\}$.
\end{itemize}
Since $\mathcal{J}\cap C_0(X)$ is a $G$-invariant closed two-sided ideal in
$C_0(X)$, by minimality it follows that either
$(a')$ $\mathcal{J}\cap C_0(X)=C_0(X)$, or $(b')$ $\mathcal{J}\cap C_0(X)=\{0\}$.
In case $(a')$ it follows immediately that $\mathcal{J}=C_0(X)\times_{\text{red}}G$.
In case $(b')$, since $C_0(X)$ is an abelian core, by property {\sc (u)} it follows that
$\mathcal{J}=\{0\}$.
\end{proof}

\end{document}